\DeclareMathOperator*{\R}{Re}
\DeclareMathOperator*{\I}{Im}
\newcommand{\Mlog}{M_\mathrm{log}}
\newcommand{\MK}{M_{{K}}}
\newcommand{\dd}{\mathrm{d}}
\newcommand{\RR}{\mathbb{R}}
\newcommand{\CC}{\mathbb{C}}
\newcommand{\NN}{\mathbb{N}}
\newcommand{\ep}{\varepsilon}
\newcommand{\inv}{^{-1}}
\newcommand{\T}{(T(t))_{t\ge 0}}
\newtheorem{thm}{Theorem}[section]
\newtheorem{lem}[thm]{Lemma}
\newtheorem{cor}[thm]{Corollary}
\theoremstyle{definition}
\newtheorem{rem}[thm]{Remark}
\numberwithin{equation}{section}
\begin{document}
\title[Optimal decay of functions and operator semigroups]{An abstract approach to optimal decay of functions and operator semigroups}

\author[G. Debruyne]{Gregory Debruyne}
\thanks{G.D.\ gratefully acknowledges support by Ghent University, through a BOF Ph.D.\ grant.}
\address[G. Debruyne]{Department of Mathematics: Analysis, Logic and Discrete Mathematics\\ Ghent University\\ Krijgslaan 281\\ B 9000 Ghent\\ Belgium}
\email{gregory.debruyne@ugent.be}
\author[D. Seifert]{David Seifert}
\address[D. Seifert]{St. John's College \\ St. Giles \\ Oxford OX1 3JP\\ United Kingdom}
\email{david.seifert@sjc.ox.ac.uk}

\begin{abstract}
We provide a new and significantly shorter optimality proof of recent quantified Tauberian theorems, both in the setting of vector-valued functions and of $C_0$-semigroups, and in fact our results are also more general than those currently available in the literature. Our approach relies on a novel application of the open mapping theorem. 

\end{abstract}

\subjclass[2010]{40E05, 47D06 (44A10, 34D05).}
\keywords{Tauberian theorems, Laplace transform, analytic continuation, rates of decay, optimality, operator semigroups}

\maketitle

\section{Introduction}\label{sec:intro} 

The last decade has seen much activity in the field of quantified Tauberian theorems. Such results are of considerable intrinsic interest but also have striking applications, for instance in number theory and in the theory of partial differential equations. The following result can be viewed as a quantified version of the classical Ingham-Karamata theorem \cite{Ing33, Ka34}, and  our formulation combines elements of  \cite{BD08,ChiSei16}, which give different proofs. We refer the interested reader to \cite[Chapter~III]{Kor04} for a historical overview of the Ingham-Karamata theorem and to \cite{DebVind1, DebVind2} for recent contributions in the unquantified case.

\begin{thm}\label{thm:Mlog}
Let $X$ be a complex Banach space and let $f\colon\RR_+\to X$ be a bounded Lipschitz continuous function. Suppose there exists a non-decreasing continuous function $M\colon\RR_+\to(0,\infty)$ such that the Laplace transform 
$$\widehat{f}(\lambda)=\int_{\RR_+}e^{-\lambda t}f(t)\,\dd t,\quad \R\lambda>0,$$
of $f$ extends analytically to the region 
\begin{equation}\label{eq:Omega}
\Omega_M=\left\{\lambda\in\CC:\R\lambda>-\frac{1}{M(|{\I\lambda}|)}\right\}
\end{equation}
and satisfies 
\begin{equation}\label{eq:bound}
\sup_{\lambda\in \Omega_M}\frac{|\widehat{f}(\lambda)|}{M(|{\I\lambda}|)}<\infty.
\end{equation}
Then for any $c\in(0,1/2)$ we have
\begin{equation}\label{eq:Mlog}
\|f(t)\|=O\big(\Mlog\inv(ct)\inv\big),\quad t\to\infty,
\end{equation}
where $\Mlog(s)=M(s)(\log(1+s)+\log (1+M(s)))$, $s\ge0$.
\end{thm}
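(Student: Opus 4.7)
My plan is to prove Theorem~\ref{thm:Mlog} via a Newman-style contour shift, combined with a Lipschitz-to-pointwise reduction, broadly following the ideas of \cite{BD08,ChiSei16}.

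\textbf{Reduction to an averaged estimate.} Let $L$ be a Lipschitz constant of $f$. For any $t,h>0$,
$$\|f(t)\|\le \frac{1}{h}\left\|\int_t^{t+h}f(s)\,\dd s\right\|+\frac{Lh}{2}.$$
With $F(t)=\int_0^t f(s)\,\dd s$, the hypothesis together with the qualitative Ingham--Karamata theorem gives $F(t)\to\widehat f(0)$, so the bounded function $G=F-\widehat f(0)$ has Laplace transform $\widehat G(\lambda)=(\widehat f(\lambda)-\widehat f(0))/\lambda$ analytic throughout $\Omega_M$. Hence it suffices to estimate $\|G(t+h)-G(t)\|$ and then optimise $h$.

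\textbf{Contour representation and estimation.} For a scale $R>0$ to be chosen, deform the inverse Laplace contour into $\Gamma_R$, which coincides with $i\RR$ outside the strip $|\I\lambda|\le R$ and, inside it, follows three sides of a rectangle of depth $1/M(R)$. Inserting a Newman-type damping factor $1+\lambda^2/R^2$ ensures that the contributions from the imaginary tails are $O(h/R)$ after one integration by parts using $(e^{\lambda h}-1)/\lambda$. On the interior pieces the bound (\ref{eq:bound}) applies: the leftmost vertical segment at $\R\lambda=-1/M(R)$ contributes $e^{-t/M(R)}$ multiplied by a factor polynomial in $R$ and $M(R)$, while the two horizontal segments contribute $\lesssim h(\log(1+R)+\log(1+M(R)))/R$. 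Here $\log(1+R)$ arises from integrating $1/|\lambda|$ along parts of the imaginary axis near $\pm iR$, and $\log(1+M(R))$ from the $M(R)$-bound on $\widehat G$ across the slab of width $1/M(R)$.

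\textbf{Balancing.} Choose $R=\Mlog\inv(ct)$ so that $t/M(R)=c(\log(1+R)+\log(1+M(R)))$. For any $c<1/2$ the exponential term is negligible against the logarithmic one, giving $\|G(t+h)-G(t)\|\lesssim h/R$. Setting $h\asymp 1/R$ then balances this against $Lh/2$ in the first step and yields $\|f(t)\|=O(1/R)=O(\Mlog\inv(ct)\inv)$. The main obstacle will be producing both logarithms with the correct constants so that $c$ may be taken arbitrarily close to $1/2$; this hinges on a careful treatment of the near-corner contributions of $\Gamma_R$ and on the precise form of the damping factor.
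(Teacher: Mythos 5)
First, a framing remark: the paper does not prove Theorem~\ref{thm:Mlog} at all; it quotes it as known, attributing the proof to \cite{BD08,ChiSei16}, and its own arguments concern only the converse (optimality) statements. Your proposal must therefore stand on its own, and while its skeleton --- the Lipschitz averaging step plus a Newman--Korevaar type contour estimate for $\int_t^{t+h}f(s)\,\dd s$ --- is indeed the strategy behind the cited proofs, the two steps that carry the actual content do not work as written. The central problem is the contour representation itself. You take a contour that coincides with $i\RR$ outside $|\I\lambda|\le R$ and insert the factor $1+\lambda^2/R^2$; but on those unbounded tails this factor \emph{grows} like $(\I\lambda)^2/R^2$, the kernel $(e^{\lambda h}-1)/\lambda$ supplies only decay of order $1/|\I\lambda|$, and \eqref{eq:bound} allows $|\widehat f(iy)|$, hence $|\widehat G(iy)|\approx M(|y|)/|y|$, to grow arbitrarily fast, since Theorem~\ref{thm:Mlog} places no growth restriction on $M$. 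The tail integrals therefore need not converge at all --- even for bounded $M$ the integrand is of size $\approx R^{-2}$ on $|\I\lambda|\ge R$, so its integral over the tails diverges --- and ``one integration by parts'' does not repair this, because derivatives of $\widehat f$ on $i\RR$ are controlled, via Cauchy estimates on discs of radius $\approx 1/M$, only by $M^2$. This is exactly why the proofs in \cite{BD08,ChiSei16} keep the whole contour bounded, splitting off the finite Laplace transform $\int_0^{T}e^{-\lambda s}f(s)\,\dd s$, which is entire and is estimated on the right half of the circle $|\lambda|=R$ using only $\|f\|_\infty$ (note that in your sketch the boundedness of $f$ never enters the quantitative estimate at all); the alternative is to replace $1+\lambda^2/R^2$ by a multiplier whose Fourier transform has compact support, so that no tails occur. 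Without one of these devices the identity on which your whole estimate rests is not established. A smaller point: the rectangle at depth exactly $1/M(R)$ may touch $\partial\Omega_M$ (e.g.\ where $M$ is locally constant), so the detour must be taken at depth $\theta/M(R)$ with $\theta<1$.

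The second gap is in the bookkeeping that is supposed to produce $\Mlog$ and the constant $c$. The two logarithms do not come from the horizontal segments at height $\pm R$ (these have length $1/M(R)$, carry $|e^{\lambda t}|\le 1$ and contribute only $O(1/R)$); they come from the long vertical segment, via $\int_{1/M(R)}^{R}\dd y/y\approx\log(1+R)+\log(1+M(R))$, so that this segment contributes about $M(R)\bigl(\log(1+R)+\log(1+M(R))\bigr)e^{-\theta t/M(R)}$, not merely ``$e^{-t/M(R)}$ times something polynomial''. Moreover your balancing is internally inconsistent: $R=\Mlog\inv(ct)$ gives $t/M(R)=c\inv\bigl(\log(1+R)+\log(1+M(R))\bigr)$, not $c\bigl(\log(1+R)+\log(1+M(R))\bigr)$, and the monotonicity in your final claim is backwards. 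With your normalisation the exponential factor is only $\bigl((1+R)(1+M(R))\bigr)^{-c}$, which for $c<1/2$ cannot dominate the factor $\approx R\,M(R)\log\bigl(RM(R)\bigr)$ it must absorb in order to leave a bound of size $1/R$; what is needed is $t/M(R)\ge(1+\varepsilon)\log\bigl((1+R)(1+M(R))\bigr)$, i.e.\ under the correct normalisation a value of $c$ bounded away from $1$ --- smaller $c$ weakens the conclusion and eases the proof, not the reverse --- and pinning down the admissible range of $c$ (the point of the refinements in \cite{ChiSei16,Sta18}) requires precisely the careful corner and kernel analysis you defer. In short, the outline is the right one, but the representation formula and the quantitative balancing, which together constitute the proof, are respectively unestablished and incorrect as stated.
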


Note that in the formulation of \cite{BD08}, which really considers the derivative of our function $f$, it would be natural to have an additional factor of $|\lambda|$ in the numerator of \eqref{eq:bound}, but by \cite{ChiSei16} the above weaker condition is sufficient.  We remark that if $M$ increases more rapidly than a polynomial, then the constant $c$ in \eqref{eq:Mlog} can generally be absorbed in the $O$-constant. As a consequence of Theorem~\ref{thm:Mlog} we obtain the following result for $C_0$-semigroups, which is central to modern investigations of energy decay in damped wave equations. Recall that if $A$ is the generator of a bounded $C_0$-semigroup then the spectrum $\sigma(A)$ of $A$ is contained in the closed left-half plane.

\begin{cor}\label{cor:sg}
Let $X$ be a complex Banach space and let $\T$ be a bounded $C_0$-semigroup on $X$ whose generator $A$ satisfies $\sigma(A)\cap i\RR=\emptyset$. Suppose that $M\colon\RR_+\to(0,\infty)$ is a non-decreasing continuous function such that $\|R(is,A)\|=O( M(|s|))$ as $|s|\to\infty$. Then  
\begin{equation}\label{eq:Mlog_sg}
\|T(t)A^{-1}\|=O\big(\Mlog\inv(ct)\inv\big),\quad t\to\infty,
\end{equation}
 for some $c>0$, where the function  $\Mlog$ is as defined in Theorem~\ref{thm:Mlog}.
\end{cor}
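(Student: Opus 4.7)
The plan is to derive the corollary by applying Theorem~\ref{thm:Mlog} to the operator-valued function $f(t) = T(t)A\inv$, viewed as a map $\RR_+ \to \B(X)$. Since $\B(X)$ is itself a complex Banach space and nothing in the statement of Theorem~\ref{thm:Mlog} exploits special features of the target space, the theorem applies verbatim, and this already yields the uniform-in-$x$ bound $\|T(t)A\inv\| = O(\Mlog\inv(ct)\inv)$ in one stroke, bypassing any separate uniformity argument.

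The first two hypotheses are easy to verify. Because $\sigma(A)\cap i\RR = \emptyset$ we have $0 \in \rho(A)$, so $A\inv \in \B(X)$ and $\|f(t)\| \le \|T(t)\|\,\|A\inv\|$ is uniformly bounded by the boundedness of the semigroup. For Lipschitz continuity, the identity $T(t)A\inv x - T(s)A\inv x = \int_s^t T(r)x\,\dd r$, valid for every $x \in X$, gives $\|T(t)A\inv - T(s)A\inv\| \le |t-s|\sup_{r\ge 0}\|T(r)\|$ in operator norm (which in particular makes $f$ norm-continuous and hence Bochner integrable in $\B(X)$).

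The substantive step is the Laplace-transform hypothesis. For $\R\lambda > 0$ the Bochner integral $\widehat f(\lambda)$ exists in operator norm and equals $R(\lambda,A)A\inv$. To extend $\widehat f$ analytically to a region of the form $\Omega_M$ with the bound \eqref{eq:bound}, I would use the Neumann-series expansion
\[
R(\lambda,A) = \sum_{n\ge 0}(is-\lambda)^n R(is,A)^{n+1},
\]
which converges whenever $|\lambda - is| < 1/\|R(is,A)\|$. Combined with the hypothesis $\|R(is,A)\| = O(M(|s|))$, this produces an analytic extension of $R(\cdot,A)$ to $\Omega_{\widetilde M}$ with $\widetilde M = CM$ for some $C > 0$, satisfying $\|R(\lambda,A)\| = O(\widetilde M(|\I\lambda|))$ on that region; for $|\I\lambda|$ in a compact range, $\sigma(A)\cap i\RR = \emptyset$ together with continuity of $R(\cdot,A)$ and $M$ handles both the extension and the bound.

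Applying Theorem~\ref{thm:Mlog} to $f$ with $M$ replaced by $\widetilde M$ then yields the stated decay rate; since $\widetilde M$ differs from $M$ only by a constant factor, the corresponding $\log$-corrected function differs from $\Mlog$ by a bounded factor for large argument, so the conclusion translates to $\|T(t)A\inv\| = O(\Mlog\inv(c't)\inv)$ for a possibly smaller $c' > 0$, as required. The main obstacle is the resolvent-extension step just outlined; aside from that standard argument, the corollary reduces directly to Theorem~\ref{thm:Mlog}.
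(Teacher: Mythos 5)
Your argument is correct and follows essentially the route the paper intends: the remark immediately after Corollary~\ref{cor:sg} invokes exactly the standard Neumann-series extension of the resolvent to a region $\Omega_{\delta M}$ with norm controlled by $M$, after which Theorem~\ref{thm:Mlog} is applied to $T(t)A\inv$ (your choice to apply it directly to the $\B(X)$-valued function, using $\widehat{f}(\lambda)=R(\lambda,A)A\inv$ and the Lipschitz bound from $\frac{\dd}{\dd t}T(t)A\inv x=T(t)x$, is the standard way to get uniformity in one stroke). The rescaling from $\widetilde M=CM$ back to $\Mlog$ at the cost of a smaller constant $c$ is also consistent with the paper's ``for some $c>0$'' formulation.
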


Note that in this case the resolvent bound along the imaginary axis already implies, by a standard Neumann series argument, that the resolvent extends analytically to a region of the form $\Omega_{\delta M}$ for some constant $\delta>0$ and its norm is controlled by $M$ in this region. We have stated the resolvent bound in Corollary~\ref{cor:sg} in the form most convenient for our purposes here. Observe, however, that if we replace the growth condition on the resolvent by the more precise estimate $\|R(is,A)\|\le M(|s|)$, $s\in\RR$, then one in fact obtains \eqref{eq:Mlog_sg} for all $c\in(0,1)$; see \cite{Sta18}. The sharpest rate in Corollary~\ref{cor:sg} is then obtained by choosing 
$$M(s)={\sup_{|r|\le s}}\|R(ir,A)\|,\quad s\ge0,$$
 and for this choice it is shown in \cite{BD08} that one always has the lower bound $\|T(t)A\inv\|\ge cM\inv(Ct)\inv$ for some constants $C,c>0$ and all sufficiently large $t>0$; see also \cite[Section~4.4]{ABHN11}. Here $M\inv$ denotes any right-inverse of the function $M$. This raises the question whether the upper bounds in \eqref{eq:Mlog} and \eqref{eq:Mlog_sg} are optimal, at least up to constants. Note first that the question is of interest only if $M$  grows neither too slowly nor too rapidly. Indeed, if $M$ is bounded or grows only logarithmically then elementary examples of multiplication semigroups show that \eqref{eq:Mlog_sg} is sharp, while if $M$ is, say, an exponential function then $M\inv$ and $\smash{\Mlog\inv}$ have the same asymptotic behaviour. It was shown in \cite{BT10}  that if $M$ is of the polynomial form  $M(s)=\smash{s^\beta}$, $s\ge1$, for some $\beta>0$ and if $X$ is a Hilbert space then \eqref{eq:Mlog_sg} may be replaced by the optimal estimate $\smash{\|T(t)A\inv\|=O(t^{-1/\beta})}$, $t\to\infty$, and this result was extended to the much larger class of functions $M$ having \emph{positive increase} in \cite{RoSeSt17}; see also \cite{BaChTo16}. On the other hand, it was also shown in \cite{BT10}  that in the polynomial case the upper bound in \eqref{eq:Mlog_sg} is sharp if no restrictions are imposed on the Banach space $X$ and the upper bound in \eqref{eq:Mlog} is sharp even for scalar-valued functions.  These results were subsequently extended in \cite{BaBoTo16,Sta18} using essentially the same argument as \cite{BT10}. The main idea in these proofs is to first show optimality of the upper bound in (a version of) Theorem~\ref{thm:Mlog} for scalar-valued functions and then to deduce optimality of the rate in Corollary~\ref{cor:sg} by considering the shift semigroup on a suitable function space. The first of these steps is achieved by means of an elaborate and rather delicate construction; see \cite[Section~5]{BaBoTo16},  \cite[Section~3]{BT10} and \cite[Section~4]{Sta18}.

The aim of the present paper is to prove optimality of the upper bounds in Theorem~\ref{thm:Mlog} and Corollary~\ref{cor:sg} for a larger class of functions $M$ and by means of a much shorter argument than is currently available in the literature. Our proof combines a simple application of the open mapping theorem with a powerful recent result on the existence of non-trivial analytic functions exhibiting rapid decay along strips. First, in Section~\ref{sec:funct}, we obtain optimality of the upper bound in Theorem~\ref{thm:Mlog} for scalar-valued functions and then, in Section~\ref{sec:sg} we follow \cite{BaBoTo16,BT10, Sta18} to deduce optimality of the rate in Corollary~\ref{cor:sg} for the semigroup case. It is possible to extend the discussion to the case in which the region of analytic extension and the upper bound are defined by two different functions, and indeed this is the situation treated in \cite{Sta18}, although the observation that the shape of the region is in general even more important than the upper bound goes back to \cite{BaBoTo16}. In fact, our main results extend straightforwardly to this more general situation, and we formalise this in Theorem~\ref{thm:opt2} below, but for simplicity of exposition we restrict ourselves mainly to the case in which the two functions are the same.

Our notation is standard throughout. In particular, we let $\RR_+=[0,\infty)$ and $\CC_\pm=\{\lambda\in\CC:\R\lambda\gtrless0\}$. For real-valued quantities $x, y$ we write $x\lesssim y$ if there exists a constant $C>0$ such that $x\le Cy$, and we furthermore make use of standard asymptotic notation, such as `big O' and `little o'. 

\section{Optimal decay for functions}\label{sec:funct}

In this section we prove that the upper bound in Theorem~\ref{thm:Mlog} is sharp for scalar-valued functions. In order to be able to deal efficiently with the semigroup case in Section~\ref{sec:sg} below we also, from the outset,  consider optimality of a slightly more restrictive version of Theorem~\ref{thm:Mlog}. We begin with a simple technical lemma, which allows us to pass from functions supported on $\RR_+$ to functions supported on the whole real line.

\begin{lem}\label{lem}
Let $M,r\colon\RR_+\to(0,\infty)$ and assume that $M$ is non-decreasing and continuous. Suppose that 
\begin{equation}\label{eq:f_r}
|f(t)|=O\big(r(t)\inv\big),\quad t\to\infty,
\end{equation}
 for every bounded Lipschitz continuous function $f\colon\RR_+\to\CC$ whose Laplace transform extends analytically to the region $\Omega_M$ defined in \eqref{eq:Omega} and satisfies the bound \eqref{eq:bound}.
Then also 
\begin{equation}\label{eq:g_r}
|g(t)|=O\big(r(|t|)\inv\big),\quad |t|\to\infty,
\end{equation}
for every bounded Lipschitz continuous function $g\in L^{1}(\RR)$ such that the function
\begin{equation}\label{eq:transform}
\widehat{g}(\lambda)=\int_\RR e^{-\lambda t}g(t)\,\dd t,\quad \lambda\in i\RR,
\end{equation}
extends analytically to the region
\begin{equation}\label{eq:Omega'}
\Omega'_M=\left\{\lambda\in\CC:|{\R\lambda}|<\frac{1}{M(|{\I\lambda}|)}\right\}\end{equation}
and satisfies 
\begin{equation}\label{eq:bound'}
\sup_{\lambda\in \Omega'_M}\frac{|\widehat{g}(\lambda)|}{M(|{\I\lambda}|)}<\infty.
\end{equation}

Moreover, if \eqref{eq:f_r} is assumed to hold only for all bounded Lipschitz continuous functions $f\colon\RR_+\to\CC$ such that $f'$ is uniformly continuous, the Laplace transform of $f$ extends analytically to the region $\Omega_M$ and satisfies
 \begin{equation}\label{eq:new_bound}
\sup_{\lambda\in \Omega_M}\frac{|\lambda\widehat{f}(\lambda)|}{M(|{\I\lambda}|))}<\infty,
\end{equation}
then \eqref{eq:g_r} holds for all bounded Lipschitz continuous function $g\in W^{1,1}(\RR)$ such that $g'$ is uniformly continuous, the function $\widehat{g}$ defined in \eqref{eq:transform} extends analytically to the region $\Omega_M'$ and satisfies 
$$\sup_{\lambda\in \Omega'_M}\frac{|\lambda\widehat{g}(\lambda)|}{M(|{\I\lambda}|)}<\infty.$$
\end{lem}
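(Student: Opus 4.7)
The plan is to split the two-sided function $g$ into one-sided pieces by setting $f_+(t)=g(t)$ and $f_-(t)=g(-t)$ for $t\ge 0$. Both functions are bounded and Lipschitz continuous since $g$ is, and both lie in $L^1(\RR_+)$ since $g\in L^1(\RR)$. Once I show that $f_+$ and $f_-$ each satisfy the hypotheses placed on $f$ in the statement, applying \eqref{eq:f_r} to each yields $|f_\pm(t)|=O(r(t)\inv)$ as $t\to\infty$, which gives \eqref{eq:g_r}.

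The bulk of the work is thus to extend $\widehat{f_\pm}$ analytically to $\Omega_M$ and to verify the bound \eqref{eq:bound}. The starting point is the identity
$$\widehat{g}(\lambda)=\widehat{f_+}(\lambda)+\widehat{f_-}(-\lambda),\quad \lambda\in i\RR,$$
obtained by splitting \eqref{eq:transform} at $t=0$ and substituting $s=-t$ in the negative-time half. Since $f_\pm\in L^1(\RR_+)$, the functions $\widehat{f_+}$ and $\widehat{f_-}(-\,\cdot\,)$ are holomorphic on $\CC_+$ and $\CC_-$ respectively and extend continuously up to $i\RR$. Using that $\widehat{g}$ extends analytically to $\Omega'_M$, which contains $\Omega_M\cap\overline{\CC_-}$, I define
$$\widehat{f_+}(\lambda)=\widehat{g}(\lambda)-\widehat{f_-}(-\lambda)\quad\text{for } \lambda\in \Omega_M\cap\CC_-,$$
and combine this with the original definition on $\CC_+$ to obtain a function on $\Omega_M$. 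The main subtlety is verifying holomorphy of the combined function across the line $i\RR$; this follows from a Morera-type argument, since both pieces are holomorphic on their respective open sets and agree continuously on $i\RR$. For the bound, on $\CC_+$ one has $|\widehat{f_+}(\lambda)|\le\|g\|_{L^1(\RR)}$, while on $\Omega_M\cap\CC_-$ one has $|\widehat{f_-}(-\lambda)|\le\|g\|_{L^1(\RR)}$, so together with \eqref{eq:bound'} and $M(|\I\lambda|)\ge M(0)>0$ this yields the bound on $\widehat{f_+}$. The argument for $\widehat{f_-}$ is entirely symmetric.

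For the strengthened version, uniform continuity of $f_\pm'$ is inherited from that of $g'$. To verify the bound on $|\lambda\widehat{f_+}(\lambda)|$ I integrate by parts: for $\R\lambda>0$ one obtains $\lambda\widehat{f_+}(\lambda)=g(0)+\widehat{f_+'}(\lambda)$, and $\|\widehat{f_+'}\|_\infty\le\|g'\|_{L^1(\RR)}$ since $g\in W^{1,1}(\RR)$; on $\Omega_M\cap\CC_-$ the identity $\lambda\widehat{f_+}(\lambda)=\lambda\widehat{g}(\lambda)-\lambda\widehat{f_-}(-\lambda)$ combined with the analogous integration-by-parts estimate for $\widehat{f_-}$ supplies the required bound. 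The only genuinely delicate step throughout the argument is the analytic gluing across $i\RR$; all other steps reduce to routine $L^1$ estimates and the application of the already established bound on $\widehat{g}$.
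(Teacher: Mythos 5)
Your proposal is correct and follows essentially the same route as the paper: splitting $g$ at $t=0$ (your $f_\pm(t)=g(\pm t)$ are just the restrictions corresponding to the paper's truncations $g_\pm=g\chi_{\RR_\pm}$), gluing the half-line transform across $i\RR$ with a Morera-type argument inside $\Omega_M$, bounding it by $\|g\|_{L^1}$ plus the assumed bound on $\widehat{g}$, and handling the second statement via integration by parts giving the $|g(0)|+\|g'\|_{L^1}$ estimate. No substantive differences.
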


\begin{proof}
Let $g\in L^{1}(\RR)$ be as described and define the truncations $g_\pm\in L^{1}(\RR)$ by $g_+=g\chi_{\RR_+}$ and $g_-=g-g_+$, respectively. Consider the functions $\widehat{g_\pm}$ defined on the imaginary axis as in \eqref{eq:transform}. Then $\widehat{g_\pm}$ extends continuously to a function which is analytic on the open half-plane $\CC_\pm$, and moreover $\widehat{g_+}(\lambda)=\widehat{g}(\lambda)-\widehat{g_-}(\lambda)$ for $\lambda\in i\RR$. In particular, the map $\widehat{g_+}$ is analytic on $\CC_+$, the map $\widehat{g}-\widehat{g_-}$ is analytic on $\Omega'_M\cap\CC_-$ and both maps extend continuously to the same function on the imaginary axis. It follows from an application of Morera's theorem  that $\widehat{g_+}$ extends analytically to the region $\Omega_M$ and agrees on $\Omega_M\cap\CC_-$ with $\widehat{g}-\widehat{g_-}$; see for instance \cite{Pai88,Ru71} for related extension results requiring much milder assumptions. 
Since $|\widehat{g_\pm}(\lambda)|\le \|g_\pm\|_{L^1}$ for $\lambda\in\CC_\pm$, respectively, we see that
$$\sup_{\lambda\in \Omega_M}\frac{|\widehat{g_+}(\lambda)|}{M(|{\I\lambda}|)}<\infty.$$
Hence by the assumption of the lemma we deduce that $|g(t)|=O(r(t)\inv)$ as $t\to\infty$. Applying the same argument to the function $t\mapsto g(-t)$, $t\in\RR$,  gives $|g(t)|=O(r(|t|)\inv)$ as $t\to-\infty$, which proves the first part. The proof of the second statement is entirely analogous and uses the fact that $|\lambda\widehat{g_\pm}(\lambda)|\le|g(0)|+ \|g'_\pm\|_{L^1}$ for $\lambda\in\CC_\pm$, respectively, in this case.
\end{proof}

We now come to the main result of this section. It shows that the estimate in Theorem~\ref{thm:Mlog} is sharp even for scalar-valued functions whenever the function $M$ grows at most exponentially and when the constant $c$ in \eqref{eq:Mlog} can be absorbed in the $O$-constant. Note that by the comments following Corollary~\ref{cor:sg} these assumptions  on $M$ impose no real restriction.  Our result is thus the most general result concerning optimality of Theorem~\ref{thm:Mlog} currently available. In order to prepare the ground for the semigroup setting, to be considered in Section~\ref{sec:sg} below, we also show that Theorem~\ref{thm:Mlog} is optimal when \eqref{eq:bound} is replaced by \eqref{eq:new_bound}, provided that $M$ grows at least polynomially.  In the latter case our result is similar to the optimality results obtained in \cite[Section~4]{Sta18}, which generalise the arguments of \cite[Section~5]{BaBoTo16} and \cite[Section~3]{BT10}, from the polynomial case to more general functions $M$; see also Theorem~\ref{thm:opt2} below. However, compared with the previous optimality proofs our argument is significantly shorter. It is based on a surprising application of the open mapping theorem inspired by \cite{Gan71} (where the idea is attributed to H\"ormander). We mention that in \cite{DebVind3} the open mapping theorem is used to prove another kind of optimality result for the Ingham-Karamata theorem; for two other related applications of the Baire category theorem and its consequences see \cite{DePr93,Pey54}.

\begin{thm}\label{thm:opt}
Let $M,r\colon\RR_+\to(0,\infty)$ be non-decreasing functions and assume that $M$ is continuous and such that $M(s)=O(e^{\alpha s})$ as $s\to\infty$ for some $\alpha>0$. Suppose that 
\begin{equation}\label{eq:f_r2}
|f(t)|=O\big(r(t)\inv\big),\quad t\to\infty,
\end{equation}
 for every bounded Lipschitz continuous function $f\colon\RR_+\to\CC$ whose Laplace transform  extends analytically to the region $\Omega_M$ defined in \eqref{eq:Omega} and satisfies the bound \eqref{eq:bound}. Then
 \begin{equation}\label{eq:result}
 r(t)=O\big(\Mlog\inv(t)\big),\quad t\to\infty,
 \end{equation}
where $\Mlog$ is as defined in Theorem~\ref{thm:Mlog}.

Moreover, if we assume in addition that $M(s)\ge bs^\beta$ for some $b,\beta>0$ and all sufficiently large $s\ge0$ but that \eqref{eq:f_r2} holds only for all bounded Lipschitz continuous functions $f\colon\RR_+\to\CC$ such that $f'$ is uniformly continuous, the Laplace transform of $f$ extends analytically to the region $\Omega_M$ and satisfies the bound \eqref{eq:new_bound}, then 
 \begin{equation}\label{eq:result2}
 r(t)=O\big(\Mlog\inv(ct)\big),\quad t\to\infty,
 \end{equation}
 where $c=1+\beta\inv$.
 \end{thm}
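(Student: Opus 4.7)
The plan is to combine Lemma~\ref{lem} with the open mapping (closed graph) theorem and an explicit construction of rapidly-decaying analytic test functions. First, I would pass to the bilateral setting via Lemma~\ref{lem} and organise the admissible functions into a Banach space. Let $Y$ denote the space of bounded Lipschitz continuous $g \in L^1(\RR)$ whose bilateral Fourier--Laplace transform $\widehat g$ extends analytically to $\Omega'_M$ and satisfies $\sup_{\Omega'_M} |\widehat g(\lambda)|/M(|{\I\lambda}|) < \infty$, endowed with the norm $\|g\|_Y = \|g\|_\infty + \mathrm{Lip}(g) + \|g\|_{L^1} + \sup_{\Omega'_M} |\widehat g(\lambda)|/M(|{\I\lambda}|)$. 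A routine verification shows that $(Y, \|\cdot\|_Y)$ is a Banach space, and by Lemma~\ref{lem} each $g \in Y$ satisfies $|g(t)| = O(r(|t|)\inv)$ as $|t|\to\infty$.

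Next, introduce the weighted space $Z = \{h \in C_b(\RR) : \|h\|_Z := \sup_{t \in \RR} r(|t|)|h(t)| < \infty\}$, which is a Banach space, and consider the natural inclusion $Y \hookrightarrow Z$. Since convergence in $Y$ entails uniform convergence and convergence in $Z$ forces pointwise convergence (as $r > 0$), limits in the two spaces coincide, so the inclusion has closed graph. The closed graph theorem then supplies a constant $C > 0$ with $r(|t|)|g(t)| \le C \|g\|_Y$ for all $g \in Y$ and all $t \in \RR$.

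The heart of the argument is the construction, for each sufficiently large $t_0>0$, of a test function $g_{t_0} \in Y$ with $g_{t_0}(t_0) = 1$ and $\|g_{t_0}\|_Y \lesssim \Mlog\inv(t_0)$. The natural ansatz $g_{t_0}(t) = \psi(t - t_0)$ yields $\widehat{g_{t_0}}(\lambda) = e^{-\lambda t_0} \widehat\psi(\lambda)$, and on $\Omega'_M$ the factor $|e^{-\lambda t_0}| \le e^{t_0/M(|{\I\lambda}|)}$ must be compensated by a correspondingly rapid decay $|\widehat\psi(\lambda)| \lesssim \Mlog\inv(t_0)\, M(|{\I\lambda}|)\, e^{-t_0/M(|{\I\lambda}|)}$, together with $\psi(0) = 1$ and $\|\psi\|_\infty + \mathrm{Lip}(\psi) \lesssim \Mlog\inv(t_0)$. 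The defining relation $t_0 = M(s_*)(\log(1+s_*) + \log(1+M(s_*)))$ with $s_* = \Mlog\inv(t_0)$ is precisely what makes these scalings internally consistent, and the existence of such a $\psi$ is the content of the powerful recent result on non-trivial analytic functions exhibiting rapid decay along strips that is advertised in the introduction. The hypothesis $M(s) = O(e^{\alpha s})$ enters here to guarantee that the required decay of $\widehat\psi$ near $\I\lambda = 0$ is compatible with non-triviality of $\psi$.

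Combining the two ingredients then gives $r(t_0) = r(t_0)|g_{t_0}(t_0)| \le C \|g_{t_0}\|_Y \lesssim \Mlog\inv(t_0)$, which is \eqref{eq:result}. For the second statement I would use the analogous space $Y'$ built with the modified Fourier-side seminorm $\sup_{\Omega'_M} |\lambda \widehat g(\lambda)|/M(|{\I\lambda}|)$; the polynomial lower bound $M(s) \ge bs^\beta$ then ensures that the additional factor $|\lambda|$ on the Fourier side costs a power of $s_*^\beta$ which, through the same $\Mlog$ relation, translates into precisely the stated constant $c = 1 + \beta\inv$ in \eqref{eq:result2}. The main obstacle is the test-function construction in the third paragraph: once an analytic function on $\Omega'_M$ with the extremely precise rapid decay along strips is available, the open mapping theorem turns it essentially mechanically into optimality of the rate.
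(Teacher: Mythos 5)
Your functional-analytic reduction is sound and is essentially the paper's argument: the paper also bundles the admissible bilateral functions into a Banach space via Lemma~\ref{lem} and extracts the uniform bound $\sup_{t}r(|t|)|g(t)|\lesssim\|g\|_Y$ (it phrases this through the open mapping theorem applied to two norms on the same set, but your closed-graph formulation of the embedding into the weighted space is equivalent). The gap is in the third paragraph, and it is not a cosmetic one: you posit, for each $t_0$, a function $\psi$ with $\psi(0)=1$, $\|\psi\|_\infty+\mathrm{Lip}(\psi)+\|\psi\|_{L^1}\lesssim\Mlog\inv(t_0)$ and the tailored decay $|\widehat\psi(\lambda)|\lesssim\Mlog\inv(t_0)\,M(|{\I\lambda}|)e^{-t_0/M(|{\I\lambda}|)}$ on all of $\Omega_M'$, and you attribute its existence to the ``rapid decay along strips'' result. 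That result (Debrouwere--Vindas, \cite[Proposition~4.3]{DebVin18}, or the explicit construction in the paper) only furnishes a \emph{single fixed} non-trivial function $h$ whose transform decays like $\exp(-\exp(\ep|{\I\lambda}|))$ on a fixed vertical strip; it does not give a $t_0$-dependent family with your profile, normalisation and norm control, and note in particular that near the real axis your profile demands $|\widehat\psi|\lesssim e^{-t_0/M(0)}$, which a plain translate $\psi(\cdot-t_0)$ of a fixed function can never achieve. The missing idea, which is the heart of the paper's proof, is to \emph{modulate} the fixed function: take $g_{R,t}(s)=e^{iR(s-t)}h(s-t)$, so that $\widehat{g_{R,t}}(\lambda)=e^{-\lambda t}\widehat h(\lambda-iR)$, split $\Omega_M'$ into $|{\I\lambda}-R|\le R/2$ (where monotonicity of $M$ gives the term $M(R/2)\inv\exp(t/M(R/2))$) and $|{\I\lambda}-R|\ge R/2$ (where the double-exponential decay of $\widehat h$ beats $e^{t/M(0)}$ provided $t\le M(0)\exp(\ep R/2)$), and then choose $R=C\Mlog\inv(t)$. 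It is precisely in verifying this last admissibility condition that the hypothesis $M(s)=O(e^{\alpha s})$ is used, since it guarantees $\Mlog\inv(t)\gtrsim\log t$; your remark that the exponential bound ensures ``compatibility with non-triviality'' points in the right direction but is not a proof.

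The same caveat applies to your treatment of the second statement: the extra factor $|\lambda|$ produces, after modulation, an extra factor of $R$ (not literally ``a power of $s_*^\beta$''), giving $r(t)\lesssim R+R\,M(R/2)\inv\exp(t/M(R/2))$, and the constant $c=1+\beta\inv$ then comes out of the explicit computation with $R=C\Mlog\inv(ct)$ and the lower bound $M(s)\ge bs^\beta$; also, to apply the hypothesis in this case the test functions must have uniformly continuous derivative and one needs the $W^{1,1}$ version of Lemma~\ref{lem}, which your space $Y'$ should build in. In short: the scaffolding is right and matches the paper, but the test-function construction you identify as ``the main obstacle'' is exactly the step that cannot be quoted from the literature and must be carried out; without it the argument is incomplete.
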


\begin{proof}  
We begin by defining two Banach spaces. Let $X$ be the vector space of all  bounded Lipschitz continuous functions $g\in L^1(\RR)$ such that the function $\widehat{g}$ defined in \eqref{eq:transform} extends analytically to the region $\Omega'_M$ defined in \eqref{eq:Omega'} and satisfies the bound  \eqref{eq:bound'}, and  endow $X$ with the complete norm
$$\|g\|_X=\|g\|_{L^{1}}+\|g\|_{W^{1,\infty}}+\sup_{\lambda\in \Omega'_M}\frac{|{\widehat{g}(\lambda)}|}{M(|{\I\lambda}|)},\quad g\in X.$$
Let $Y$ be the set of all functions $g\in X$ such that $|g(t)|=O(r(|t|)\inv)$ as $|t|\to\infty$, endowed with the complete norm
$$\|g\|_Y=\|g\|_X+\sup_{t\in\RR}|g(t)|r(|t|),\quad g\in Y.$$
From these definitions it is clear that $Y$ is continuously embedded in $X$. However, by our hypothesis in \eqref{eq:f_r2} and by Lemma~\ref{lem} we have $X\subseteq Y$. Thus $X=Y$ as sets and by the open mapping theorem the two norms are equivalent, so $\|g\|_Y\lesssim\|g\|_X$ for all $g\in X$. In particular, we have
\begin{equation}\label{eq:g_bd}
\sup_{t\in\RR}|g(t)|r(|t|)\lesssim \|g\|_X,\quad g\in X.
\end{equation}
We now make a judicious choice of $g\in X$. Let $S_M=\{\lambda\in\CC:|{\R\lambda}|<M(0)\inv\}$. Observe first that for $\ep>0$ the function $H_\ep(\lambda)=\exp(2e^{i\ep \lambda}+2e^{-i\ep \lambda})$ is entire and, writing $\lambda=x+iy$, we have 
$$|H_\ep(\lambda)|=\exp\big(2\cos(\ep x)(e^{\ep y}+e^{-\ep y})\big).$$ 
In particular, for $\frac{2\pi}{3\ep}\le x\le \frac{\pi}{\ep}$ we have $|H_\ep(\lambda)|=O(\exp(-\exp(\ep|y|)))$ as $|y|\to\infty$, the implicit constant being independent of $x$. Since the interval of permissible real parts $x$ can be made arbitrarily long by choosing $\ep>0$ to be sufficiently small, it follows by considering shifted versions of the functions $H_\ep$ that there exists a non-trivial entire function $H$ satisfying
\begin{equation}\label{eq:strip}
\sup_{\lambda\in S_M}|H(\lambda)|\exp(\exp(\ep |{\I \lambda}|))<\infty
\end{equation}
for some $\ep>0$; see also \cite[Proposition~4.3]{DebVin18}, where it is moreover shown that this rate of decay on a strip is essentially best possible. Define $h\colon\RR\to\CC$ by 
$$h(t)=\frac{1}{2\pi}\int_\RR e^{iut}H(iu)\,\dd u,\quad t\in\RR.$$
Then standard estimates and integration by parts show that $h\in W^{1,\infty}(\RR)\cap W^{1,1}(\RR)$, and in fact $h$ is smooth. Moreover, the function $\smash{\widehat{h}}$ defined as in \eqref{eq:transform} extends analytically to the strip $S_M$ and satisfies $\smash{\widehat{h}(\lambda)}=H(\lambda)$, $\lambda\in S_M$. Since $H$ and hence $h$ are non-trivial there exists $t_0\in\RR$ such that $h(t_0)\ne0$. By rescaling $h$ and considering the function $t\mapsto h(-t)$, $t\in\RR,$ if necessary we may assume that $t_0\ge0$ and $h(t_0)=1$. For $R,t\ge1$ consider the function  
$$g_{R,t}(s)=e^{iR(s-t)}h(s-t),\quad s\in\RR.$$ 
It is straightforward to verify that $g_{R,t}\in X$. Since $r$ is assumed to be non-decreasing we have $r(t)\le r(t+t_0)$, and it follows from \eqref{eq:g_bd} that
$$r(t)\le \sup_{s\in\RR}|g_{R,t}(s)|r(|s|)\lesssim R+\sup_{\lambda\in \Omega'_M}\frac{|\widehat{h}(\lambda-iR)|}{M(|{\I\lambda}|)}\exp\left(\frac{t}{M(|{\I\lambda}|)}\right),$$
where the implicit constants are independent of $R,t\ge1$. We  estimate the supremum term on the right-hand side. Let $\lambda\in\Omega'_M$.   If $|\I\lambda-R|\le R/2$ then 
$$\frac{|\widehat{h}(\lambda-iR)|}{M(|{\I\lambda}|)}\exp\left(\frac{t}{M(|{\I\lambda}|)}\right)\lesssim\frac{1}{M(R/2)}\exp\left(\frac{t}{M(R/2)}\right).$$
On the other hand, if $|\I\lambda-R|\ge R/2$ then by \eqref{eq:strip} we have
$$\frac{|\widehat{h}(\lambda-iR)|}{M(|{\I\lambda}|)}\exp\left(\frac{t}{M(|{\I\lambda}|)}\right)\lesssim \exp\left(\frac{t}{M(0)}-\exp\left(\frac{\ep R}{2}\right)\right)\le1$$
 provided  $R,t\ge1$ are such that
\begin{equation}\label{eq:cond}
t\le M(0)\exp\left(\frac{\ep R}{2}\right).
\end{equation} 
It follows that 
\begin{equation}\label{eq:two_terms}
r(t)\lesssim R+\frac{1}{M(R/2)}\exp\left(\frac{t}{M(R/2)}\right)
\end{equation}
for all $R,t\ge1$ satisfying \eqref{eq:cond}.  By assumption we have $M(t)=O(e^{\alpha t})$ as $t\to\infty$ for some $\alpha>0$. Hence if we set $R=C\smash{\Mlog\inv}(t)$ where $C>\max\{2,2\alpha\ep\inv\}$, then \eqref{eq:cond} holds for all sufficiently large $t\ge1$ and \eqref{eq:result} follows easily from \eqref{eq:two_terms}.

The proof of the second statement is analogous. Indeed, an almost identical argument shows that in this case \eqref{eq:two_terms} becomes
\begin{equation}\label{eq:two_terms2}
r(t)\lesssim R+\frac{R}{M(R/2)}\exp\left(\frac{t}{M(R/2)}\right)
\end{equation}
for all $R,t\ge1$ satisfying \eqref{eq:cond}, possibly for a slightly smaller value of $\varepsilon>0$. Here the additional factor of $R$ as compared with \eqref{eq:two_terms} is due to the additional factor of $\lambda$ in \eqref{eq:new_bound} as compared with \eqref{eq:bound}. If we now let $c=1+\beta\inv$ and choose $R=C\smash{\Mlog\inv}(ct)$ for a sufficiently large value of $C>0$, then \eqref{eq:cond} holds for all sufficiently large $t\ge1$ and \eqref{eq:result2} follows after a simple calculation using the lower bound for $M$.
\end{proof}

\begin{rem}\label{rem:MK}
\begin{enumerate}[(a)]
\item The proof can be adapted to the situation in which the Laplace transform extends to a $C^k$-function on the imaginary axis for some $k\in\NN$, thus showing that also part~(a) of \cite[Theorem~2.1]{ChiSei16} is sharp.
\item We remark that even though our approach is simpler than that of \cite{BaBoTo16, BT10, Sta18}, the price to be paid for the gain in elegance is that even when $M$ grows polynomially our non-constructive approach does not produce a particular function $f$ for which the rate of decay $|f(t)|=O\big(\Mlog\inv(t)\big)$, $t\to\infty$, is sharp.
\end{enumerate}
\end{rem}

We observe that the above proof extends without significant modification to the case in which the function defining the region to which the Laplace transform $\smash{\widehat{f}}$ extends analytically is allowed to be different from the function giving the bound on $\smash{\widehat{f}}$ on that region, as is the case in \cite{Sta18}. We state the result for the convenience of the reader.

\begin{thm}\label{thm:opt2}
Let $M,K,r \colon\RR_+\to(0,\infty)$ be  non-decreasing functions and assume that $M,K$ are continuous and such that $M(s)=O(e^{\alpha s})$ and $K(s) = O(\exp(e^{\alpha s}))$ as $s\to\infty$ for some $\alpha>0$. Suppose that \eqref{eq:f_r2} holds for every bounded Lipschitz continuous function $f\colon\RR_+\to\CC$ whose Laplace transform  extends analytically to the region $\Omega_M$ defined in \eqref{eq:Omega} and satisfies the bound 
$$ \sup_{\lambda\in \Omega_M}\frac{|\widehat{f}(\lambda)|}{K(|{\I\lambda}|)}<\infty.$$
Then
$$ r(t)=O\big(\MK\inv(t)\big),\quad t\to\infty,$$
where $\MK(s) = M(s)(\log(1+s) + \log(1+K(s)))$, $s\ge0$.           

Moreover, if we assume in addition that $K(s)\ge bs^\beta$ for some $b,\beta>0$ and all sufficiently large $s\ge0$ but that \eqref{eq:f_r2} holds only for all bounded Lipschitz continuous functions $f\colon\RR_+\to\CC$ such that $f'$ is uniformly continuous, the Laplace transform of $f$ extends analytically to the region $\Omega_M$ and satisfies the bound 
$$ \sup_{\lambda\in \Omega_M}\frac{|\lambda\widehat{f}(\lambda)|}{K(|{\I\lambda}|)}<\infty,$$
then 
$$ r(t)=O\big(\MK\inv(ct)\big),\quad t\to\infty,$$
 where $c=1+\beta\inv$.
 \end{thm}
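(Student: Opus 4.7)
The plan is to run essentially the same three-step argument as in the proof of Theorem~\ref{thm:opt}, with the bound function $K$ replacing $M$ in the weighted-supremum denominator. The growth hypotheses on $M$ and $K$ are exactly what is needed to ensure the auxiliary condition~\eqref{eq:cond} can still be arranged, so no new ideas are required and only the bookkeeping changes.

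First I would observe that Lemma~\ref{lem} remains true when $M(|{\I\lambda}|)$ in the denominators of~\eqref{eq:bound'} and its derivative analogue is replaced by $K(|{\I\lambda}|)$: its proof uses only Morera's theorem and the trivial estimates $|\widehat{g_\pm}(\lambda)|\le\|g_\pm\|_{L^1}$ and $|\lambda\widehat{g_\pm}(\lambda)|\le|g(0)|+\|g'_\pm\|_{L^1}$ on the appropriate half-planes, and these remain uniformly bounded after division by any positive non-decreasing function. This transfers the hypothesis from $\RR_+$ to $\RR$. Next I would introduce Banach spaces $X$ and $Y$ exactly as before, but with $K$ in place of $M$ in the weighted sup-term of $\|\cdot\|_X$; the hypothesis together with the extended lemma gives $X=Y$ as vector spaces, and the open mapping theorem produces $\sup_{t\in\RR}|g(t)|r(|t|)\lesssim\|g\|_X$ for $g\in X$. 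Substituting the modulated-translate test function $g_{R,t}(s)=e^{iR(s-t)}h(s-t)$, where $h$ is the inverse Fourier transform of the entire function $H$ from~\eqref{eq:strip}, so that $\widehat{g_{R,t}}(\lambda)=e^{-\lambda t-iRt}H(\lambda-iR)$, and splitting the supremum over $\Omega'_M$ into the regions $|{\I\lambda}-R|\le R/2$ (use $H$ bounded and $K(|\I\lambda|)\ge K(R/2)$) and $|{\I\lambda}-R|\ge R/2$ (use the doubly-exponential decay of $H$) yields
\[
r(t)\lesssim R+\frac{1}{K(R/2)}\exp\!\Big(\frac{t}{M(R/2)}\Big),
\]
valid for all $R,t\ge1$ satisfying \eqref{eq:cond}.

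The main point to check, and the only step where the new growth hypothesis on $K$ really plays a role, is that \eqref{eq:cond} can be arranged with the choice $R=C\MK\inv(t)$. Since $K(s)=O(\exp(e^{\alpha s}))$ gives $\log(1+K(s))=O(e^{\alpha s})$, combining with $M(s)=O(e^{\alpha s})$ produces $\MK(s)=O(e^{2\alpha s})$ and hence $\MK\inv(t)\gtrsim\log t$; choosing $C$ sufficiently large then makes $\exp(\ep R/2)\gtrsim t$ for all large $t$, and a short manipulation shows the second term of the displayed inequality is $\lesssim R$, giving $r(t)=O(\MK\inv(t))$. The second statement is obtained by the same route: the extra $\lambda$ in~\eqref{eq:new_bound} produces an additional factor of $R$ in the exponential term, which is absorbed by the polynomial lower bound $K(s)\ge bs^{\beta}$ (which forces $\log(1+s)\lesssim\beta\inv\log(1+K(s))$ for large $s$, so that $\MK(s)\le(1+\beta\inv)M(s)\log(1+K(s))+O(M(s))$), at the cost of replacing the argument $t$ of $\MK\inv$ by $ct$ with $c=1+\beta\inv$.
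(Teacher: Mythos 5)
Your proposal is correct and follows essentially the same route as the paper, which proves Theorem~\ref{thm:opt2} precisely by noting that the proof of Theorem~\ref{thm:opt} goes through with $K$ in the weighted denominators while the region stays governed by $M$; your verification of \eqref{eq:cond} via $\MK\inv(t)\gtrsim\log t$ and the absorption of the extra factor $R$ using $K(s)\ge bs^{\beta}$ is exactly the intended bookkeeping. (Only a harmless slip: $\widehat{g_{R,t}}(\lambda)=e^{-\lambda t}\widehat{h}(\lambda-iR)$, without the extra factor $e^{-iRt}$, which in any case has modulus one.)
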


\section{Optimal decay for operator semigroups}\label{sec:sg}

In this section we follow the proofs of \cite[Theorem~7.1]{BaBoTo16}, \cite[Theorem~4.1]{BT10} and \cite[Theorem~4.10]{Sta18} to show that Theorem~\ref{thm:opt}  implies optimality of  Corollary~\ref{cor:sg} in the semigroup setting for a large class of functions $M$. We shall say that a function $M\colon\RR_+\to(0,\infty)$ is \emph{regularly growing} if 
\begin{enumerate}[(i)]
\item $M$ is  non-decreasing and continuous;
\item there exists $c\in(0,1)$ such that 
$$M(s)\ge c M\left(s+\frac{c}{M(s)}\right),\quad s\ge0;$$ 
\item  there exist constants $C,\alpha,\beta>0$ such that $C\inv s^\beta\le M(s)\le C e^{\alpha s}$ for all sufficiently large $s\ge0$.
\end{enumerate}
Condition (i) is entirely expected, while condition~(ii) is a rather mild assumption ruling out sudden jumps in the growth of the function $M$. In particular, the condition is significantly weaker than condition (H1) of \cite[Section~4]{Sta18}. Condition~(ii) is satisfied for instance if there exist $C,\varepsilon>0$ such that $M(s+t)\le CM(s)$ for $s\ge0$ and  $0\le t\le\varepsilon.$ As discussed in Section~\ref{sec:intro} there is no substantial loss of generality in imposing both a lower and an upper bound on $M$ in this case. The simple polynomial lower bound in condition~(iii) replaces the more complicated assumptions made in \cite[Theorem~4.10]{Sta18}. Note however that in some ways \cite[Theorem~4.10]{Sta18} is more precise than our result; see also Remark~\ref{rem:sg} below.

\begin{thm}\label{thm:sg}
Given any regularly growing function $M\colon\RR_+\to(0,\infty)$ there exists a complex Banach space $X$ and a bounded $C_0$-semigroup $\T$ on $X$ whose generator $A$ satisfies $\sigma(A)\cap i\RR=\emptyset$, 
$\|R(is,A)\|=O(M(|s|))$ as $|s|\to\infty$ and 
\begin{equation}\label{eq:lb}
\limsup_{t\to\infty}\big\|\Mlog\inv(ct)T(t)A\inv\big\|>0,
\end{equation}
 where $c=1+\beta\inv$ with $\beta>0$ as in condition~\emph{(iii)} above and where $\Mlog$ is as defined in Theorem~\ref{thm:Mlog}.
\end{thm}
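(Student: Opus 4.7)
The plan is to follow the strategy of the works cited at the beginning of this section and construct the required $C_0$-semigroup as the left shift $(T(t)g)(s)=g(s+t)$ on a carefully chosen Banach space $X$ of scalar-valued functions. The space $X$ will be designed so that (i) $(T(t))$ is a bounded $C_0$-semigroup on $X$, (ii) its generator $A$ satisfies $0\notin\sigma(A)$ with $\sigma(A)\cap i\RR=\emptyset$ and $\|R(is,A)\|=O(M(|s|))$ as $|s|\to\infty$, and (iii) every $f\in D(A)$, when restricted to $\RR_+$, lies in the class of scalar functions to which the second part of Theorem~\ref{thm:opt} applies. With such an $X$ in place, \eqref{eq:lb} will follow by a short contradiction argument.

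Concretely, I take $X$ to be the space of bounded, uniformly continuous functions $g\colon\RR\to\CC$ whose two-sided Laplace transform $\widehat g$ (defined a priori on $i\RR$, extended in the spirit of Lemma~\ref{lem}) extends analytically to the region $\Omega'_M$ from \eqref{eq:Omega'}, endowed with the norm
\[
\|g\|_X=\|g\|_\infty+\sup_{\lambda\in\Omega'_M,\,\R\lambda\le0}\frac{|\widehat g(\lambda)|}{M(|\I\lambda|)}.
\]
Restricting the supremum to $\R\lambda\le 0$ is crucial: since $\widehat{T(t)g}(\lambda)=e^{\lambda t}\widehat g(\lambda)$ and $|e^{\lambda t}|\le1$ for $\R\lambda\le0$, $t\ge0$, this ensures $T(t)$ is contractive in that part of the norm, so that $(T(t))$ is a bounded $C_0$-semigroup on $X$. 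The generator $A$ acts as $d/ds$, and $g\in D(A)$ means $g, g'\in X$; since $\widehat{g'}(\lambda)=\lambda\widehat g(\lambda)$, the condition $g'\in X$ is precisely the bound \eqref{eq:new_bound} for the two-sided transform, which transfers to the one-sided transform of $g|_{\RR_+}$ on $\Omega_M$ via Lemma~\ref{lem}. A mild modification of $X$ (e.g.\ passing to a complemented subspace not containing constants or to an invariant quotient) arranges that $0\notin\sigma(A)$, and a direct estimate using condition~(ii) of regularly growing, which ensures comparability of $M$ across scales $1/M(|s|)$, yields $\|R(is,A)\|=O(M(|s|))$ as $|s|\to\infty$.

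For the deduction of \eqref{eq:lb}, assume for contradiction that $\|\Mlog\inv(ct) T(t)A\inv\|\to 0$ as $t\to\infty$. Since $\|\cdot\|_X$ dominates the sup-norm, for any $g\in X$ with $\|g\|_X\le 1$ and $f=A\inv g\in D(A)$ we have $|f(t)|\le\|T(t)f\|_X\le\|T(t)A\inv\|$. Pick a non-decreasing $\psi\colon\RR_+\to(0,\infty)$ with $\psi(t)\to\infty$ slowly enough that $\psi(t)\Mlog\inv(ct)\|T(t)A\inv\|\to 0$ still holds (e.g.\ $\psi(t)=(\Mlog\inv(ct)\|T(t)A\inv\|)^{-1/2}$), and set $r(t)=\psi(t)\Mlog\inv(ct)$. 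Then $|f(t)|=O(r(t)\inv)$ for every $f\in D(A)$. A cutoff construction shows that every bounded Lipschitz continuous $\phi\colon\RR_+\to\CC$ with $\phi'$ uniformly continuous and Laplace transform extending to $\Omega_M$ subject to \eqref{eq:new_bound} admits an extension $\widetilde\phi\in D(A)$, so $|\phi(t)|=O(r(t)\inv)$ holds for every such $\phi$. Invoking the second part of Theorem~\ref{thm:opt} --- where the polynomial lower bound $M(s)\ge bs^\beta$ from condition~(iii) is used and produces the constant $c=1+\beta\inv$ --- we deduce $r(t)=O(\Mlog\inv(ct))$, contradicting $\psi(t)\to\infty$.

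The main technical obstacle will be the verification of the resolvent bound $\|R(is,A)\|=O(M(|s|))$ and the exclusion of $i\RR$ from $\sigma(A)$. The resolvent bound hinges on condition~(ii) to preclude abrupt jumps of $M$ relative to its own scale, while the spectral condition requires a delicate choice of subspace or quotient of the naive $X$. A secondary subtlety is the construction of the extension $\phi\mapsto\widetilde\phi$ preserving the Laplace transform extension property on $\Omega_M$, which can be handled by a cutoff argument analogous to that in Lemma~\ref{lem}.
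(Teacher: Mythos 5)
There is a genuine gap, and it lies exactly at the point you defer as ``the main technical obstacle'': the spectral condition $\sigma(A)\cap i\RR=\emptyset$ together with the resolvent bound cannot hold for the space you propose, so no amount of extra work will verify them. Your $X$ consists of functions on the \emph{whole line} and your norm dominates $\|\cdot\|_{L^\infty(\RR)}$, which is invariant under the left shift: $\|T(t)f\|_X\ge\|T(t)f\|_\infty=\|f\|_\infty$ for every $t\ge0$. Hence, as soon as $X\ne\{0\}$ and $A$ is invertible, picking any $g$ in the unit ball of $X$ with $A\inv g\ne0$ gives $\|T(t)A\inv\|\ge\|A\inv g\|_\infty>0$ for all $t$, i.e.\ $\|T(t)A\inv\|$ does not decay at all. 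But Corollary~\ref{cor:sg} shows that a \emph{bounded} semigroup with $\sigma(A)\cap i\RR=\emptyset$ and $\|R(is,A)\|=O(M(|s|))$ must satisfy $\|T(t)A\inv\|\to0$. So on your space at least one of the three hypotheses you claim in step (ii) necessarily fails (in practice the two-sided shift produces approximate spectrum on the imaginary axis, or the resolvent bound breaks down), and the vague remedy of ``passing to a complemented subspace or an invariant quotient'' does not address this obstruction --- it is not a matter of removing constants (which are not even in your space) but of the translation-invariance of the sup-norm over $\RR$. This is precisely why the paper's proof works with functions on $\RR_+$: there the left shift genuinely discards the initial segment, $\sup_{s\ge t}|f(s)|$ can decay, and the norm uses the one-sided Laplace transform on the region $\Omega=\{\R\lambda>-M(|\I\lambda|)\inv,\ |\R\lambda|<1\}$, with the spectral and resolvent properties imported from \cite{BT10,BaBoTo16} (condition~(ii) of regular growth being what makes those arguments carry over).

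Two further points. First, your space is not well defined as stated: a bounded uniformly continuous $g$ on $\RR$ need not have a two-sided transform on $i\RR$ at all, so integrability would have to be built into $X$ (and into the norm, for completeness). Second, your final contradiction argument needs every admissible one-sided function $\phi$ to be controlled by $\|T(t)A\inv\|$, which in your setting requires an extension $\widetilde\phi\in D(A)$ to the whole line preserving the transform bounds; this is asserted via an unspecified ``cutoff construction'' and is not supplied by Lemma~\ref{lem}, which goes in the opposite direction. In the paper's setting no extension is needed: $f$ and $f'$ lie in $X$ directly, $f=A\inv f'$, and $|f(t)|\le\|T(t)f\|_{L^\infty}\le\|T(t)A\inv f'\|_X$ immediately yields the decay hypothesis required to invoke the second part of Theorem~\ref{thm:opt}. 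Your concluding contradiction step (choice of $r$, appeal to Theorem~\ref{thm:opt} with $c=1+\beta\inv$) does mirror the paper's and is fine, but it rests on properties of the underlying semigroup that your construction cannot deliver.
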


\begin{proof}
Let $X$ be the vector space of all bounded uniformly continuous functions $f\colon\RR_+\to\CC$ whose Laplace transform extends to the region $\Omega=\{\lambda\in\CC:\R\lambda>-M(|\I\lambda|)\inv\mbox{ and }|{\R\lambda}|<1\}$ and satisfies
$$\sup_{\lambda\in\Omega}\frac{|\widehat{f}(\lambda)|}{M(|{\I\lambda}|)}<\infty,$$
endowed with the complete norm
$$\|f\|_X=\|f\|_{L^\infty}+\sup_{\lambda\in\Omega}\frac{|\widehat{f}(\lambda)|}{M(|{\I\lambda}|)},\quad f\in X.$$
As in the proof of  \cite[Theorem~7.1]{BaBoTo16} and \cite[Theorem~4.1]{BT10} one can show that the left-shift semigroup $\T$ is a well-defined bounded $C_0$-semigroup on $X$ whose generator $A$, which is the differentiation operator on an appropriate domain, satisfies $\sigma(A)\cap i\RR=\emptyset$ and $\|R(is,A)\|=O(M(|s|))$ as $|s|\to\infty$. Note that condition (ii) in the definition of a regularly growing function is chosen precisely in such a way that all the arguments extend without major adjustments from the polynomial case to our more general setting. Let $\beta>0$ be as in part (iii) of the definition of a regularly growing function, and let $c=1+\beta\inv$. Suppose for the sake of  contradiction that $\|T(t)A\inv\|=o(\smash{\Mlog\inv}(ct)\inv)$ as $t\to\infty$. Then we may find a non-decreasing function $r\colon\RR_+\to(0,\infty)$ such that $\|T(t)A\inv\|=O(r(t)\inv)$ and $\smash{\Mlog\inv(ct)}=o(r(t))$ as $t\to\infty.$ If $f\colon\RR_+\to\CC$ is a bounded Lipschitz continuous function such that $f'$ is uniformly continuous and the Laplace transform of $f$ extends analytically to the region $\Omega_M$ defined in \eqref{eq:Omega} and satisfies the bound \eqref{eq:new_bound}, then $f,f'\in X$ and $f=A\inv f'$. Hence  
$$|f(t)|\le\|T(t)f\|_{L^\infty}\le\|T(t)A\inv f'\|_X=O\big(r(t)\inv\big),\quad t\to\infty.$$
It follows from Theorem~\ref{thm:opt} that $r(t)=O(\smash{\Mlog\inv(ct))}$  as $t\to\infty$, giving the required contradiction.
\end{proof}

\begin{rem}\label{rem:sg}
As with Theorem~\ref{thm:opt} the above optimality result extends straightforwardly to the case where the resolvent operator is assumed to extend analytically to a region larger than that implied by the usual Neumann series argument, as in \cite[Theorem~4.10]{Sta18}; see also Theorem~\ref{thm:opt2} above.
\end{rem}


\begin{thebibliography}{10}

\bibitem{ABHN11}
W.~Arendt, C.J.K. Batty, M.~Hieber, and F.~Neubrander.
\newblock {\em Vector-valued Laplace transforms and Cauchy problems}.
\newblock Birkh\"auser, Basel, second edition, 2011.

\bibitem{BaBoTo16}
C.J.K. Batty, A.~Borichev, and Y.~Tomilov.
\newblock {$L^p$-tauberian theorems and $L^p$-rates for energy decay}.
\newblock {\em J. Funct. Anal.}, 270:1153--1201, 2016.

\bibitem{BaChTo16}
C.J.K. Batty, R.~Chill, and Y.~Tomilov.
\newblock Fine scales of decay of operator semigroups.
\newblock {\em J. Eur. Math. Soc.}, 18(4):853--929, 2016.

\bibitem{BD08}
C.J.K. Batty and T.~Duyckaerts.
\newblock Non-uniform stability for bounded semi-groups in {B}anach spaces.
\newblock {\em J. Evol. Equ.}, 8:765--780, 2008.

\bibitem{BT10}
A.~Borichev and Y.~Tomilov.
\newblock Optimal polynomial decay of functions and operator semigroups.
\newblock {\em Math. Ann.}, 347:455--478, 2010.

\bibitem{ChiSei16}
R.~Chill and D.~Seifert.
\newblock {Quantified versions of Ingham's theorem}.
\newblock {\em Bull. Lond. Math. Soc.}, 48(3):519--532, 2016.

\bibitem{DebVin18}
A.~Debrouwere and J.~Vindas.
\newblock On the non-triviality of certain spaces of analytic functions.
  {H}yperfunctions and ultrahyperfunctions of fast growth.
\newblock {\em Rev. R. Acad. Cienc. Exactas F\'is. Nat. Ser. A. Math.},
  112(2):473--508, 2018.

\bibitem{DebVind1}
G.~Debruyne and J.~Vindas.
\newblock Complex {T}auberian theorems for {L}aplace transforms with local
  pseudofunction boundary behavior.
\newblock  {\em J. Anal. Math.}, 138:799--833, 2019.

\bibitem{DebVind2}
G.~Debruyne and J.~Vindas.
\newblock Optimal {T}auberian constant in {I}ngham's theorem for {L}aplace
  transforms.
\newblock {\em Israel J. Math.}, 228(2):557--586, 2018.

\bibitem{DebVind3}
G.~Debruyne and J.~Vindas.
\newblock Note on the absence of remainders in the {W}iener-{I}kehara theorem.
\newblock {\em Proc. Am. Math. Soc.}, 146:5097--5103,  2018.

\bibitem{DePr93}
W.~Desch and J.~Pr\"uss.
\newblock {Counterexamples for abstract linear Volterra equations}.
\newblock {\em J. Integral Equations Applications}, 5(1):29--45, 1993.

\bibitem{Gan71}
T.H. Ganelius.
\newblock {\em Tauberian Remainder Theorems}, volume 232 of {\em Lecture Notes
  in Mathematics}.
\newblock Springer, Berlin and New York, 1971.

\bibitem{Ing33}
A.E. Ingham.
\newblock {On Wiener's method in Tauberian theorems}.
\newblock {\em Proc. Lond. Math. Soc.}, S2-38(1):458--480, 1933.

\bibitem{Ka34}
J.~Karamata.
\newblock {Weiterf\"uhrung der N. Wienerschen Methode}.
\newblock {\em Math. Z.}, 38:701--708, 1934.

\bibitem{Kor04}
J.~Korevaar.
\newblock {\em Tauberian Theory. A Century of Developments}, volume 329 of {\em
  Grundlehren der Mathematische Wissenschaften}.
\newblock Springer, Berlin, 2004.

\bibitem{Pai88}
P.~Painlev\'e.
\newblock Sur les lignes singuli\`eres des fonctions analytiques.
\newblock {\em Annales de la Facult\'e des Sciences de Toulouse:
  Math\'ematiques, S\'erie 1}, 2:B1--B130, 1888.

\bibitem{Pey54}
A.~Peyerimhoff.
\newblock {\"Uber das Anwachsen der $C_\kappa$-Mittel von Laplace-Integralen
  auf vertikalen Geraden}.
\newblock {\em Math. Ann.}, 128(1):138--143, 1954.

\bibitem{RoSeSt17}
J.~Rozendaal, D.~Seifert, and R.~Stahn.
\newblock {Optimal rates of decay for operator semigroups on Hilbert spaces}.
\newblock \newblock {\em Adv.\ Math.}, 346:359--388, 2019.

\bibitem{Ru71}
W.~Rudin.
\newblock {\em Lectures on the edge-of-the-wedge theorem}.
\newblock American Mathematical Society, Providence, Rhode Island, 1971.

\bibitem{Sta18}
R.~Stahn.
\newblock Decay of  $C_0$-semigroups and local decay of waves on even (and odd) dimensional exterior domains.
\newblock {\em J. Evol. Equ.}, 18:1633--1674, 2018.

\end{thebibliography}
\end{document}